\newtheorem{theorem}{Theorem}
\newtheorem{proposition}[theorem]{Proposition}
\newtheorem{lemma}[theorem]{Lemma} 
\newtheorem{corollary}[theorem]{Corollary}
\newcommand{\nn}{{\mathbb N}}
\newcommand{\nnp}{{\nn_+}}
\newcommand{\zz}{{\mathbb Z}}
\newcommand{\calb}{{\mathcal{B}}}
\newcommand{\calh}{{\mathcal{H}}}
\newcommand{\calp}{{\mathcal{P}}}
\newcommand{\calq}{{\mathcal{Q}}}
\title{Iteration of functions and contractibility of acyclic 2-complexes} 
\author{Ian J. Leary\thanks{Partially supported by a Research Fellowship 
from the Leverhulme Trust.  This work was started at MSRI, Berkeley, 
where research is supported by 
the National Science Foundation under Grant No.~DMS-1440140.}
}
\date{\today}
\begin{document} 

\maketitle

\begin{abstract} 
We show that there can be no algorithm to decide whether infinite
recursively described acyclic aspherical 2-complexes are contractible.
We construct such a complex that is contractible if and only if the
Collatz conjecture holds.  
\end{abstract}

\section{Introduction} 

The existence of an algorithm to determine which finite 2-dimensional
simplicial complexes are contractible is a well-known open problem.
There are good algorithms to compute the homology of a finite complex,
and so the problem quickly reduces to the case of {\sl acyclic}
complexes, i.e., those having the same homology as a point.  This
problem can be stated as a problem about finite presentations of
groups: is there an algorithm to decide which finite balanced
presentations of perfect groups are trivial?  The problem was stated
in this form and attributed to Magnus in the first edition of the
Kourovka notebook~\cite[1.12]{kourovka}, and appears as problem~(FP1)
in~\cite{nylist}.  A presentation is {\sl balanced} if it has the same
numbers of generators and relators.  The conditions that the group be
perfect and that the presentation be balanced are equivalent to the
corresponding 2-complex being acyclic.

If a finite presentation presents the trivial group, then a 
systematic search will eventually find a proof of this 
fact.  Thus there is a partial algorithm that will verify that 
a finite simplicial complex (of any dimension) is contractible, 
but that will fail to halt in general.  

In general there is no algorithm to decide whether a finite presentation 
describes the trivial group.  This implies that there can be no algorithm
to decide whether a finite 2-dimensional simplicial complex is 
simply-connected.  However, this does not imply that there can 
be no algorithm to decide whether a finite 2-complex is contractible, 
because all known families of problematic group 
presentations have many more relators than generators.  
The corresponding presentation 2-complexes cannot be contractible
because they have non-trivial second homology.  

We have nothing to say about this well-known problem, but instead we
consider an infinite analogue.  In contrast to the finite case, there
is no algorithm for computing the homology of recursively described
infinite complexes.  For example, it has been known since the work of
Collatz in the 1930's that it is difficult to decide such questions as
whether a recursively described graph is connected~\cite{lagarias}.
(We will also justify this assertion in Theorem~\ref{thm:conn} below.)
For this reason we consider only acyclic complexes.  Here is our main
result.  

\begin{theorem} \label{thm:main}
There is no algorithm to decide whether an 
infinite, recursively described, aspherical, acyclic presentation 
2-complex is contractible.  

Moreover, there is no partial algorithm to find all the 
contractible complexes in this class, and there is no 
partial algorithm to find all the non-contractible complexes.  
\end{theorem}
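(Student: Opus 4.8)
The plan is to reduce contractibility in this class to a purely group-theoretic question and then to simulate iteration of functions by homotopically controlled enlargements of complexes. First I would record the dictionary: a connected aspherical complex is contractible if and only if its fundamental group is trivial, and an acyclic complex has $H_1=H_2=0$, so its fundamental group $G$ is superperfect. Thus the objects in the theorem are exactly recursively described aspherical presentation $2$-complexes of superperfect groups, and contractibility is exactly triviality of $G$. The governing principle is that homotopy and homology both commute with directed colimits: if I build the complex as an increasing union $K=\bigcup_n K_n$ of finite subcomplexes, then $H_*(K)=\operatorname{colim}H_*(K_n)$ and, for $n\ge 2$, $\pi_n(K)=\operatorname{colim}\pi_n(K_n)$. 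Consequently, if every $K_n$ is finite, acyclic and aspherical, then $K$ is \emph{automatically} acyclic and aspherical, and the only surviving invariant is $\pi_1(K)=\operatorname{colim}\pi_1(K_n)$. The whole problem therefore reduces to engineering a recursive increasing sequence of finite acyclic aspherical $2$-complexes whose colimit of fundamental groups is trivial exactly when a prescribed computational condition holds.

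Second, I would construct the elementary building block and the moves encoding one step of iteration. As the ``alive'' state I would use a fixed nontrivial finitely presented acyclic group with aspherical balanced presentation (such groups exist, e.g.\ via HNN constructions of Higman type), equipped with a distinguished generator that I think of as the label of a natural number $n$. Each inclusion $K_n\hookrightarrow K_{n+1}$ should add finitely many matched pairs of $1$- and $2$-cells realizing the transition $n\mapsto f(n)$: the new cells either transport the label of $n$ onto that of $f(n)$, or, when $f(n)$ is the base value, trivialize the label entirely. The delicate requirement is that each move preserve finiteness, preserve acyclicity (checked by a Mayer--Vietoris computation showing the attached relative complex is acyclic, i.e.\ the new relators are balanced and introduce no new homology), and preserve asphericity (checked via a staggering or small-cancellation criterion for the enlarged presentation, or via the fact that a graph of aspherical spaces with $\pi_1$-injective edge maps is aspherical). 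Crucially, the maps $\pi_1(K_n)\to\pi_1(K_{n+1})$ need not be injective, so a label can die in the colimit although it is nontrivial at every finite stage; it dies precisely when the orbit of $n$ under $f$ reaches the base value.

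Third, I would assemble and reduce. Given a recursive $f:\nn\to\nn$, the recursive union $K_f=\bigcup_n K_n$ is acyclic and aspherical by the colimit principle, and $\pi_1(K_f)=\operatorname{colim}\pi_1(K_n)$ is trivial if and only if every label eventually dies, i.e.\ if and only if every orbit of $f$ reaches the base value. Taking $f$ to be the Collatz function yields the promised complex that is contractible if and only if the Collatz conjecture holds. For the undecidability statements I would take $f$ to simulate one step of a Turing machine $M$, so that ``every orbit reaches the base value'' becomes ``$M$ halts on every input''. Since the totality set $\{M : M \text{ halts on all inputs}\}$ is $\Pi_2$-complete, it is neither recursively enumerable nor co-r.e., and the computable map $M\mapsto K_M$ shows that the set of contractible complexes in our class is likewise neither r.e.\ nor co-r.e. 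This simultaneously yields the non-existence of a deciding algorithm and of either one-sided partial algorithm, as required, and parallels the connectivity phenomenon of Theorem~\ref{thm:conn}.

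The step I expect to be the main obstacle is the elementary move together with the verification that it \emph{simultaneously} preserves all three properties at every stage: a finite acyclic aspherical complex is already a delicate object (its group must be superperfect of $\cd$ at most $2$), and the moves must faithfully track the dynamics of $f$ while never creating spurious $H_1$, $H_2$ or $\pi_2$. The tension is sharp because the natural way to kill a label---attaching a $2$-cell---tends to create homology unless carefully balanced by new generators, while the natural way to restore balance---attaching $1$-cells---tends to destroy asphericity unless the attaching data are staggered. Producing a single gadget that threads all of these constraints, and proving that the colimit fundamental group computes exactly the termination predicate of $f$, is the heart of the argument.
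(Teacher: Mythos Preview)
Your high-level plan---encode the dynamics of a recursive function $f$ into an aspherical acyclic $2$-complex so that contractibility detects a $\Pi^0_2$-complete orbit property---matches the paper exactly, including the appeal to totality/$\Pi^0_2$-completeness for the two one-sided non-existence claims.  But the central construction is missing, and the constraints you impose on it are stronger than necessary and would make the gadget genuinely hard to find.

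The paper does \emph{not} build $K$ as a union of finite \emph{acyclic} aspherical stages.  Instead it writes down a single explicit presentation
\[
\calp(f)=\langle\, a_i,\ i\in\nnp \ :\ a_{f(i)}^{-1}a_ia_{f(i)}=a_i^2\,\rangle,
\]
one generator and one relator per integer.  Acyclicity of the whole complex $P(f)$ is then immediate, since each relator abelianizes to $a_i=0$; no balancing tricks at finite stages are needed, and indeed the natural finite subcomplexes $B(n)$ (the first $n$ relators of the shift $f(i)=i+1$) are \emph{not} acyclic.  Asphericity is obtained not by small cancellation or staggering but by recognising $P(f)$ as assembled from copies of the Baumslag--Solitar complex $B(1)$ and the Higman complexes $H(n)$, and applying the graph-of-groups criterion (edge groups $\langle a_i\rangle$ inject because each $a_i$ has infinite order in the relevant piece).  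The dynamical translation is also slightly different from what you propose: $P(f)$ is contractible iff every forward orbit of $f$ is \emph{eventually periodic of period at most~$3$} (because $H(n)$ presents the trivial group iff $n\le 3$), not iff every orbit reaches a base point.  A short auxiliary construction $f\mapsto\widehat f$ converts ``every orbit reaches $1$'' into ``every orbit is eventually periodic of period $\le 3$'', after which the Kurtz--Simon theorem (the generalized Collatz problem is $\Pi^0_2$-complete) finishes the proof.

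So the gap in your proposal is exactly the one you flag: you have not produced the elementary move, and your self-imposed requirement that every finite stage already be acyclic and aspherical is what makes it look delicate.  Dropping stage-wise acyclicity and using the Baumslag--Solitar relation $a_i^{a_{f(i)}}=a_i^2$ as the single ``move'' resolves all three constraints at once: balance and global acyclicity are automatic, asphericity comes from standard HNN/amalgam arguments, and the orbit structure of $f$ is read off from which Higman cycles $H(n)$ appear.
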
 

Usually decidability results in group theory rely on the 
existence of a non-recursive, recursively enumerable set.  
Instead we rely on the dynamics of functions on the set 
$\nnp$ of strictly positive integers.  We associate a 2-complex 
$P(f)$ to each $f:\nnp\rightarrow\nnp$ whose contractibility
is controlled by the orbits of $f$.  For such a function $f$,
define a group presentation $\calp(f)$ in which both the
generators and relators are indexed by $\nnp$: 
\[\calp(f):= \langle a_i,\,\,i\in \nnp\,\,:\,\,
a_{f(i)}^{-1}a_ia_{f(i)}=a_i^2\,\rangle.\]
Now let $P(f)$ be the presentation 2-complex associated to $\calp(f)$, 
so that the 1- and 2-cells of $P(f)$ are indexed by $\nnp$.  Recall 
also that a {\sl forward orbit} for $f$ is a subset of $\nnp$ 
of the form $\{i,f(i),f^2(i),\ldots\}$ for some $i\in \nnp$.  

\begin{lemma}\label{lemma:main}
  For each $f$, the 2-complex $P(f)$ is both acyclic and aspherical.
  The following are equivalent.
  \begin{itemize}
  \item{} $P(f)$ is contractible;
  \item{} $\calp(f)$ presents the trivial group; 
  \item{} every forward orbit of $f$ is eventually periodic of
    period at most 3.
  \end{itemize}
\end{lemma}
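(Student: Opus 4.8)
The plan is to prove the two assertions of the lemma separately: first the homotopy-theoretic properties, then the three-way equivalence. \emph{Acyclicity and asphericity.} The complex $P(f)$ has a single $0$-cell and one $1$-cell and one $2$-cell for each $i\in\nnp$, so I would begin with the cellular chain complex. Every $1$-cell is a loop, so $\partial_1=0$ and $H_0=\zz$. The boundary of the $2$-cell indexed by $i$ abelianises the relator $a_{f(i)}^{-1}a_ia_{f(i)}a_i^{-2}$; in the abelianisation the $a_{f(i)}$-terms cancel and $a_i$ acquires total exponent $-1$, so $\partial_2$ sends the $i$-th $2$-cell to $-1$ times the $i$-th $1$-cell. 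Hence $\partial_2\colon C_2\to C_1$ is an isomorphism, giving $H_1=H_2=0$, so $P(f)$ is acyclic for every $f$. For asphericity I would argue that the presentation is diagrammatically reducible: each relator is exactly the defining relator of $BS(1,2)=\langle a,b:b^{-1}ab=a^2\rangle$, which is aspherical since its relator is not a proper power. I would make this rigorous by verifying Gersten's weight test on the star graph (the link of the unique vertex), assigning to the five corners of each relator the weights coming from the standard aspherical structure on the $BS(1,2)$-complex; I expect the test to localise to each relator, so the possibly infinite valence of the vertices $a_i^{\pm}$ causes no difficulty. I regard this step as routine.

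\emph{Contractible $\Leftrightarrow$ trivial group.} This is formal and uses only acyclicity. If $P(f)$ is contractible then $\pi_1=1$, i.e.\ $\calp(f)$ presents the trivial group. Conversely, if the group is trivial then $P(f)$ is simply connected and acyclic; by Hurewicz all its homotopy groups vanish, and by Whitehead's theorem it is contractible.

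\emph{Dynamics $\Rightarrow$ trivial group.} The key local computation is that a cycle of $f$ of length $n\le 3$ forces the corresponding generators to be trivial: for $n=1$ the relation reads $a_i=a_i^2$, and for $n=2$ (and, with a little more work, $n=3$) a short manipulation of the conjugacy relations collapses the generators to $1$. Granting this, I would propagate triviality backwards along $f$: if $a_{f(i)}=1$ then $a_{f(i)}^{-1}a_ia_{f(i)}=a_i^2$ reads $a_i=a_i^2$, so $a_i=1$. Hence if the forward orbit of $i$ reaches, after finitely many steps, a cycle of length $\le 3$, then $a_i=1$. Under the third bullet every orbit does this, so every generator, and thus the group, is trivial.

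\emph{Trivial group $\Rightarrow$ dynamics, and the main obstacle.} I would argue the contrapositive. If some forward orbit is not eventually periodic of period $\le 3$, then either it is eventually periodic with a terminal cycle $C$ of length $n\ge 4$, or it is an infinite orbit $O$ on which $f$ acts injectively. In either case the relevant index set is forward-invariant, so setting $a_m=1$ for all $m$ outside it makes every remaining relation internal and yields a quotient of $\calp(f)$: the cyclically presented group $G_n=\langle x_1,\dots,x_n:x_{i+1}^{-1}x_ix_{i+1}=x_i^2\rangle$ (indices mod $n$), respectively the infinite group $G_O=\langle x_0,x_1,\dots:x_{m+1}^{-1}x_mx_{m+1}=x_m^2\rangle$. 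It then suffices to show these are non-trivial. For $G_O$ there is a clean argument: building it as the ascending union $\bigcup_k\langle x_0,\dots,x_k\rangle$, each stage is the HNN extension of the previous one with stable letter $x_{k+1}$ and associated subgroups $\langle x_k\rangle\cong\langle x_k^2\rangle$; since $x_k$ has infinite order at each stage these are genuine HNN extensions, so $BS(1,2)=\langle x_0,x_1\rangle$ embeds and $x_0\ne 1$. The genuinely hard step is the non-triviality of $G_n$ for $n\ge 4$: the cyclic closure obstructs the HNN argument, and one must instead exhibit an explicit non-trivial quotient. Because $G_n$ is perfect, such a quotient must be perfect too, and the natural candidates are representations in which conjugation realises doubling on elements of finite odd order $q$ with $2$ of the appropriate multiplicative order modulo $q$. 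Producing such a representation that survives the cyclic closure exactly when $n\ge 4$—equivalently, confirming that the collapse occurs for $n\le 3$ but no further—is the crux; the sharp threshold at $3$ is precisely what later allows the construction to encode the length-$3$ Collatz cycle.
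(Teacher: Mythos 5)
Your acyclicity computation, the Hurewicz--Whitehead argument identifying contractibility with triviality of the presented group, the backward propagation of triviality along orbits into a short cycle, and the ascending-HNN argument for an infinite orbit are all correct, and your reduction of the remaining implication to non-triviality of the cyclically presented groups $G_n$ parallels the paper's use of Higman's groups $\calh(n)$. But there are two genuine gaps, one in each half. First, asphericity: Gersten's weight test does \emph{not} ``localise to each relator,'' and it provably fails exactly in the cases that carry the content of the lemma. If $f$ has a $2$-cycle, then $P(f)$ contains the subcomplex $H(2)$ for $\calh(2)=\langle a,b : a^b=a^2,\ b^a=b^2\rangle$, and the weight-test constraints for $P(f)$ restrict to those of this subcomplex. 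In the star graph of $H(2)$ the two relators contribute four pairs of parallel edges; each such bigon is a reduced cycle, forcing its two corner weights to sum to at least $2$, so the eight corners involved sum to at least $8$. But the per-relator bound (each relator has length $5$, so its corner weights sum to at most $3$) caps that same total by $6$ minus the weights of the two remaining corners $\{a,a^{-1}\}$ and $\{b,b^{-1}\}$, while the triangles through those two corners force each of them to be at least $1$. These inequalities are inconsistent, so no weight assignment exists. More structurally: the weight test implies diagrammatic reducibility, DR is inherited by subcomplexes, and a finite DR complex collapses onto a graph inside any cover (Corson--Trace); yet $H(2)$ and $H(3)$ present the trivial group, hence are contractible, but have no free faces at all, so they are not DR. When $f$ has a cycle of length at most $3$, the complex $P(f)$ is aspherical only because such a subcomplex is contractible --- a fact invisible to any relator-local curvature test. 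The paper instead proves asphericity by induction over $f$-invariant subsets, assembling $P(f)$ from the aspherical pieces $B(n)$ and $H(n)$ glued along $\pi_1$-injective circles and roses, using Propositions~\ref{prop:ggp} and~\ref{prop:fpa}.

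Second, the crux you isolate --- non-triviality of $G_n$ for $n\geq 4$ --- is left unproved, and the strategy you sketch cannot succeed. Representations in which conjugation realises doubling on elements of finite odd order produce finite quotients, but \emph{every} finite quotient of $G_n$ is trivial, for every $n$: in a finite quotient each $a_i$ is conjugate to $a_i^2$, so its order $d_i$ is odd and conjugation by $a_{i+1}$ acts on $\langle a_i\rangle$ as squaring; taking the smallest prime $p$ dividing some $d_i$, the multiplicative order of $2$ modulo $p$ is less than $p$ and divides $d_{i+1}$, which forces all $d_i=1$. This is precisely Higman's observation, and it is why these groups are the raw material for infinite simple groups; in particular finite quotients can never separate $n\geq 4$ from $n\leq 3$. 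The paper's route, following Higman, is structural: $H(n)$ is obtained by gluing $B(n-2)$ and $B(2)$ along a $2$-petalled rose which is $\pi_1$-injective on both sides because $\langle a_1,a_{n-1}\rangle$ is free of rank $2$ in $\pi_1(B(n-2))$ (Corollary~\ref{cor:bsn}, via the normal form theorem for amalgams), so $\pi_1(H(n))$ is a non-trivial amalgamated product in which each $a_i$ has infinite order (Corollary~\ref{cor:higman}). Without a proof of this step and without a working asphericity argument, both assertions of the lemma remain open in your write-up.
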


There is a striking corollary concerning the Collatz function.  
Recall that the Collatz or hailstone function 
$C:\nnp\rightarrow \nnp$ is defined by
\[C(n)= \begin{cases}
  3n+1&\hbox{for $n$ odd},\\
  n/2&\hbox{for $n$ even}.\\
    \end{cases}\] 
The {\sl Collatz conjecture} states that every forward orbit of $C$
contains 1.  

\begin{corollary}\label{corollary:collatz}
  $P(C)$ is contractible if and only if the Collatz conjecture holds.
\end{corollary}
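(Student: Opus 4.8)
The plan is to derive the corollary from Lemma~\ref{lemma:main} applied to the Collatz function $C$. The lemma gives a three-way equivalence; the relevant clause is that $P(C)$ is contractible if and only if every forward orbit of $C$ is eventually periodic of period at most $3$. So the whole task reduces to matching this orbit condition against the statement of the Collatz conjecture, namely that every forward orbit of $C$ contains $1$.

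First I would record that $1$ lies on a cycle of $C$ of length exactly $3$. Indeed $C(1)=3\cdot1+1=4$, $C(4)=4/2=2$, and $C(2)=2/2=1$, so the forward orbit of $1$ is the $3$-cycle $\{1,4,2\}$. Hence any forward orbit that contains $1$ is eventually periodic with period dividing $3$, and in particular of period at most $3$. This shows the forward direction: if the Collatz conjecture holds, every forward orbit of $C$ reaches $1$ and therefore is eventually periodic of period at most $3$, so by Lemma~\ref{lemma:main} the complex $P(C)$ is contractible.

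For the converse I would argue that $\{1,4,2\}$ is in fact the \emph{only} forward orbit of $C$ of period at most $3$ — more precisely, that it is the only periodic cycle of $C$ of length at most $3$, so that an orbit being eventually periodic of period at most $3$ forces it to land in $\{1,4,2\}$ and hence to contain $1$. The key step here is a finite, elementary verification: I would enumerate the possible short cycles. A periodic point $n$ of period $1$ solves $C(n)=n$, which has no positive solution (neither $3n+1=n$ nor $n/2=n$ holds for $n\in\nnp$). For period $2$ and period $3$ I would examine each pattern of odd/even steps around the cycle; each pattern yields a linear equation whose unique positive-integer solution (if any) recovers the known cycle. The main obstacle is precisely this case analysis: one must check that the only cycle produced across all odd/even step patterns of length up to $3$ is $1\to4\to2\to1$, so that ``eventually periodic of period at most $3$'' and ``contains $1$'' coincide for $C$.

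Assuming that verification, the corollary follows cleanly. If $P(C)$ is contractible, then by Lemma~\ref{lemma:main} every forward orbit of $C$ is eventually periodic of period at most $3$; by the uniqueness of the short cycle this period-$\le 3$ behaviour can only be realised by the cycle $\{1,4,2\}$, so every forward orbit eventually enters $\{1,4,2\}$ and in particular contains $1$. That is exactly the Collatz conjecture. Combining the two directions gives the biconditional, completing the proof.
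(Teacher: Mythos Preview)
Your proposal is correct and follows the same overall strategy as the paper: apply Lemma~\ref{lemma:main} to reduce contractibility of $P(C)$ to the orbit condition, then identify that condition with the Collatz conjecture by showing that $\{1,4,2\}$ is the unique $C$-cycle of length at most~$3$. The only difference is in how that cycle uniqueness is verified. You propose enumerating all odd/even step-patterns of length $\le 3$ and solving the resulting linear equations; this works (and is streamlined by the observation that an odd input always produces an even output, so two consecutive ``odd'' steps never occur). The paper instead takes the minimum element $n_1$ of an arbitrary periodic orbit: then $n_1$ must be odd, $n_2=3n_1+1>2n_1$ forces $C(n_2)>n_1$ so the period exceeds~$2$, and if the period is~$3$ one gets $(3n_1+1)/4=n_1$, i.e.\ $n_1=1$. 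The minimum-element argument is shorter and avoids the case-split, but your enumeration is equally valid.
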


This corollary follows easily from Lemma~\ref{lemma:main}.  The
proof of Theorem~\ref{thm:main} depends also on a result of
Kurtz and Simon concerning the decidability of properties
of functions~\cite{kurtz-simon}.

\section{Proofs} 

The presentation 2-complex associated to a group presentation is a
CW-complex with one 0-cell, with 1-cells in bijective correspondence
with the generators, and 2-cells in bijective correspondence with the
relators.  We shall assume that each relator is a cyclically reduced
word in the generators.  The 1-skeleton of the 2-complex is a rose,
whose fundamental group is naturally identified with the free group on
the set of generators in the given presentation.  An element of this
group describes a based homotopy class of maps from the circle to the
1-skeleton.  The relator corresponding to a 2-cell is used in this way
to describe its attaching map.  For details see~\cite[p.~50]{hatcher}.
A presentation 2-complex is {\sl acyclic} if it has the same homology
as a point, and is {\sl aspherical} if its universal covering space is
contractible, or equivalently if it is an Eilenberg--Mac~Lane space for 
the group presented.  Some authors use the term `aspherical presentation' 
for a more general situtation that arises when some of the relators are 
proper powers~\cite{lynsch}; this will not concern us.  

There is an algorithm to pass from presentation 2-complexes as 
described above to homotopy equivalent simplicial complexes.  Each 
2-cell corresponding to a relator of length $n$ should be viewed
as an $n$-gon; the second barycentric subdivision of the polygonal 
cell complex obtained in this way is a simplicial complex.  Each
petal of the original 1-skeleton rose is triangulated as the 
boundary of a square, and the 2-cell corresponding to a relator 
of length $n$ is built from $12n$ triangles. 

Conversely, if $K$ is a connected 2-dimensional simplicial complex and 
$T$ is a maximal tree in $K$, the quotient space $K/T$ is naturally a 
CW-complex with one 0-cell, and so it may be viewed as a presentation 
complex, with generators corresponding to the edges of $K-T$ and relators
corresponding to the 2-simplices of $K$.  This process too is algorithmic, 
given $K$ and $T$, but when $K$ is infinite there may be no algorithm to 
find a maximal tree~$T$.  

The discussion in the two paragraphs above leads to the following 
proposition.  

\begin{proposition} 
The existence of an algorithm to decide contractibility for finite 
presentation 2-complexes is equivalent to the existence of an algorithm
to decide contractibility for finite 2-dimensional simplicial complexes.  
\end{proposition}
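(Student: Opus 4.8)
The plan is to prove the equivalence of two algorithmic decision problems by exhibiting algorithmic reductions in both directions, using the two constructions already sketched in the preceding paragraphs of the excerpt. The statement asserts an equivalence of the form "an algorithm of type A exists iff an algorithm of type B exists," so the natural strategy is to show that each algorithm can be converted into the other by pre- or post-composing with an algorithmic translation between the two classes of complexes that preserves homotopy type (and hence contractibility).

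First I would handle the forward direction: suppose there is an algorithm deciding contractibility for finite \emph{simplicial} complexes. Given a finite presentation 2-complex $P$, I would invoke the first construction from the excerpt, which produces algorithmically a homotopy equivalent finite simplicial complex $K_P$ (triangulating each petal as the boundary of a square and each length-$n$ relator 2-cell as $12n$ triangles). Since $P$ and $K_P$ are homotopy equivalent, $P$ is contractible if and only if $K_P$ is; running the assumed simplicial algorithm on $K_P$ therefore decides contractibility of $P$. The only point to check is that this translation is genuinely effective, which is immediate because the number and arrangement of triangles is an explicit function of the relator lengths.

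Second I would handle the reverse direction, which is where the real subtlety lies. Suppose there is an algorithm deciding contractibility for finite presentation 2-complexes. Given a finite connected 2-dimensional simplicial complex $K$, I would apply the second construction: choose a maximal tree $T$ and form $K/T$, which is a presentation complex homotopy equivalent to $K$. Here the key observation is that, \emph{since $K$ is finite}, a maximal tree $T$ can be found algorithmically (e.g.\ by a greedy edge-selection or spanning-tree procedure), so the obstruction flagged in the excerpt for infinite complexes does not arise. Running the assumed presentation-complex algorithm on $K/T$ then decides contractibility of $K$.

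The main obstacle to watch is precisely the maximal-tree issue: the excerpt explicitly warns that for infinite $K$ there may be no algorithm to find a maximal tree, so the argument must use finiteness essentially at this step, and I would state clearly that finiteness guarantees both the termination of a spanning-tree search and the finiteness of the resulting presentation. A minor bookkeeping point is to confirm that both translations preserve homotopy equivalence—this is exactly what the two construction paragraphs assert—so that contractibility is transported faithfully in each direction. Once both reductions are in place, the biconditional follows: either algorithm yields the other, establishing the claimed equivalence.
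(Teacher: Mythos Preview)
Your proposal is correct and mirrors the paper's own argument: the paper's proof simply says the result follows from the two constructions in the preceding discussion, the only extra ingredient being that for a \emph{finite} connected simplicial complex one can algorithmically find a maximal tree. Your write-up is more detailed but uses exactly the same reductions and identifies the same key point.
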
 

\begin{proof} 
  Follows from the discussion above, since there is an algorithm to
  find a maximal tree inside a finite connected 2-dimensional
  simplicial complex.  
\end{proof} 

The discussion above also leads to a corollary to our main theorem.  

\begin{corollary} 
For the class of recursively defined acyclic aspherical 2-dimensional 
simplicial complexes, there is no partial algorithm to find all 
contractible complexes in the class, and no partial algorithm to 
find all non-contractible complexes. 
\end{corollary}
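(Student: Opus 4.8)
The plan is to deduce both non-existence statements from the corresponding assertions of Theorem~\ref{thm:main} by means of the effective passage from presentation $2$-complexes to simplicial complexes described above. For a presentation $2$-complex $P$, write $K(P)$ for the simplicial complex obtained by regarding each $2$-cell attached along a relator of length $n$ as an $n$-gon, triangulating each petal of the $1$-skeleton as the boundary of a square, and then passing to the second barycentric subdivision. The crucial feature of this construction is that it is \emph{local}: the simplices replacing the part of $P$ coming from a given cell, together with their face relations, are determined solely by the length of the corresponding relator. Consequently a recursive description of $P$---a computable enumeration of its generators together with its relators as cyclically reduced words---yields, uniformly and algorithmically, a recursive description of $K(P)$. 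This is precisely the direction of passage that requires no choice of maximal tree, and so avoids the obstruction noted for the reverse direction.

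First I would confirm that $K(P)$ lies in the simplicial class of the corollary whenever $P$ lies in the class of Theorem~\ref{thm:main}. By construction $K(P)$ is homotopy equivalent to $P$, and acyclicity, asphericity (being a $K(\pi,1)$) and contractibility are all homotopy invariants. Hence $K(P)$ is a recursively defined acyclic aspherical $2$-dimensional simplicial complex, and $K(P)$ is contractible if and only if $P$ is.

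The reduction then proceeds by contradiction. Suppose there were a partial algorithm that halts on exactly the contractible complexes in the simplicial class. Precomposing it with the computable map $P\mapsto K(P)$ would give a procedure halting on exactly the contractible presentation $2$-complexes in the class of Theorem~\ref{thm:main}, contradicting the second assertion of that theorem. The identical argument with ``non-contractible'' in place of ``contractible'' contradicts the third assertion, and together these give the corollary.

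The one point needing care is not conceptual but concerns effectiveness: one must fix what is meant by a recursive description of an infinite simplicial complex and check that the face relations of $K(P)$ are genuinely computable from the data defining $P$. Because the local picture around each cell is controlled entirely by the single integer $n$ and involves only the $12n$ triangles of the corresponding $n$-gon, fixing an explicit indexing of these triangles and their faces reduces the verification to routine bookkeeping, with no global or non-effective choice entering.
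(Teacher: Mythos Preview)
Your proposal is correct and follows the same approach as the paper: both argue that the effective passage $P\mapsto K(P)$ from recursively described presentation $2$-complexes to homotopy-equivalent simplicial complexes would convert a partial algorithm for the simplicial class into one contradicting Theorem~\ref{thm:main}. Your version simply spells out the effectiveness of the subdivision and the homotopy-invariance of the relevant properties in more detail than the paper's one-sentence proof.
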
 
 
\begin{proof} 
Since there is an algorithmic way to pass from a recursively described
presentation 2-complex to a recursively defined 2-dimensional simplicial
complex, a partial algorithm of the type mentioned in the statement 
of the corollary would contradict Theorem~\ref{thm:main}.  
\end{proof} 

Two infinite families of finite group presentations will play a 
role in our proofs, the families $\calb(n)$ and $\calh(n)$ given 
below.  As above, we use the notation $B(n)$ and $H(n)$ for the 
corresponding presentation 2-complexes.  

$$\calb(n):= \langle a_1,\ldots,a_{n+1} \,\,:\,\, a_i^{a_{i+1}}=a_i^2 
\rangle$$ 

$$\calh(n):= \langle a_i, \, i\in \zz/n \,\,:\,\, a_i^{a_{i+1}}=a_i^2
\rangle$$ 

These presentations first appeared as steps in Higman's construction 
of an infinite finitely generated simple group~\cite{higman}.  
To establish properties of $B(n)$, $H(n)$ and of the complexes $P(f)$, 
we will use two well-known propositions.  

\begin{proposition}\label{prop:ggp} 
Let $X$, $Y$ and $Z$ be Eilenberg--Mac~Lane spaces for the 
groups $H$, $K$ and $L$ 
respectively, and let $i:Z\rightarrow X$ and $j:Z\rightarrow Y$ be based 
maps such that the induced maps on fundamental groups are injective: 
$i_*:L\rightarrow H$ and $j_*:L\rightarrow K$.  
\begin{enumerate} 
\item The double mapping cylinder or homotopy colimit 
\[(X\sqcup Z\times I\sqcup Y)/(z,0)=i(z),\, (z,1)=j(z)\] is an 
Eilenberg--Mac~Lane space for the free product with amalgamation 
$H*_LK=\langle H,K\colon i_*(l)=j_*(l),\,\, l\in L\rangle$. 

\item If moreover $X=Y$ so that $K=H$, then the homotopy colimit 
of the smaller diagram with two arrows and just two spaces, 
$(X\sqcup Z\times I)/ (z,0)=i(z),\, (z,1)=j(z)$, is an
Eilenberg--Mac~Lane space for the HNN-extension 
$H*_L = \langle H,t\colon ti_*(l)t^{-1}=j_*(l),\,\,l\in L\rangle$.  
\end{enumerate} 
\end{proposition}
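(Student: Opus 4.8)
The plan is to verify separately the two defining properties of an Eilenberg--Mac~Lane space: that the fundamental group of the double mapping cylinder $W$ is the asserted group, and that $W$ is aspherical. I shall argue part (1) in detail, the HNN-extension of part (2) being the analogous construction in which the two ends of the cylinder $Z\times I$ are glued to the same space. Since $W$ is the homotopy pushout of $X \xleftarrow{i} Z \xrightarrow{j} Y$, its fundamental group is computed by the Seifert--van Kampen theorem: cover $W$ by open neighbourhoods of $X \cup (Z\times[0,\tfrac23))$ and of $(Z\times(\tfrac13,1])\cup Y$, whose overlap deformation retracts to $Z$, to identify $\pi_1(W)$ with the pushout of $H \xleftarrow{i_*} L \xrightarrow{j_*} K$ in the category of groups, namely $H*_LK$. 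For part (2) one instead covers the space by a neighbourhood of $X$ and a neighbourhood of the cylinder; now the overlap has two components, each retracting to $Z$, and the groupoid form of van Kampen produces a single stable letter $t$ from the interval direction, giving $H*_L$. Injectivity of $i_*$ and $j_*$ plays no role here; it enters only in the proof of asphericity.

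The main work, and the step I expect to be the genuine obstacle, is to show that the universal cover $p\colon \widetilde W \to W$ is contractible. I would analyse the decomposition of $\widetilde W$ inherited from the decomposition of $W$ into the pieces $X$, $Y$ and $Z\times I$. Each component of $p^{-1}(X)$ is the cover of $X$ corresponding to the kernel of $H \to H*_LK$; by the normal form theorem for amalgamated products this kernel is trivial, so the component is a copy of the universal cover $\widetilde X$, which is contractible because $X$ is aspherical. The same holds for the components of $p^{-1}(Y)$, while each component of $p^{-1}(Z\times I)$ is a copy of $\widetilde Z\times I$. It is exactly here that injectivity of $i_*$ and $j_*$ is needed: it guarantees that $L$ embeds in $H*_LK$, so that the edge pieces are genuine copies of the contractible space $\widetilde Z\times I$ meeting the adjacent vertex pieces in single copies of $\widetilde Z$, rather than wrapping around proper covers of $Z$. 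Thus $\widetilde W$ is assembled from contractible pieces glued along contractible subspaces, with combinatorial pattern the Bass--Serre tree $T$ of the amalgamated product (respectively of the HNN-extension).

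Finally I would deduce contractibility of $\widetilde W$ from this tree structure. Exhaust $T$ by an increasing sequence of finite subtrees, each obtained from its predecessor by adding one edge together with any new endpoint; pulling back gives an exhaustion $W_0 \subset W_1 \subset \cdots$ of $\widetilde W$, in which $W_0$ is a single copy of $\widetilde X$ and each $W_{n+1}$ is formed from $W_n$ by attaching one copy of $\widetilde Z\times I$, and possibly one copy of $\widetilde X$ or $\widetilde Y$, along a single copy of the contractible subspace $\widetilde Z$. Gluing a contractible space to a contractible space along a contractible cofibred subspace yields a contractible space, so each $W_n$ is contractible by induction; since the inclusions are cofibrations and $\widetilde W = \bigcup_n W_n$, the union $\widetilde W$ is contractible. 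This proves $W$ aspherical and completes both parts. The one point demanding care throughout is the identification of the pieces of $\widetilde W$ with copies of the universal covers $\widetilde X$, $\widetilde Y$ and $\widetilde Z$, which rests on the injectivity of the edge maps.
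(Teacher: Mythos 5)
Your argument is essentially correct, but it takes a genuinely different route from the paper: the paper disposes of this proposition in one line, citing the general theorem on graphs of groups \cite[1.B.11]{hatcher}, of which both parts are the one-edge special cases. What you have written is in effect a self-contained proof of that special case: Seifert--van Kampen (correctly invoked in its groupoid form for the HNN case, where the overlap is disconnected) identifies the fundamental group, and asphericity is proved by decomposing the universal cover $\widetilde{W}$ into contractible vertex and edge pieces glued along copies of $\widetilde{Z}$ in the pattern of the Bass--Serre tree. This is precisely the strategy of Hatcher's proof of the cited theorem, so the underlying mathematics agrees; what your version buys is transparency about exactly where injectivity of $i_*$ and $j_*$ enters (it forces the pieces of $\widetilde{W}$ to be genuine universal covers of $X$, $Y$ and $Z$), at the cost of length, and of the generality that the citation provides for free (arbitrary graphs of groups, not needed in this paper).

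Two steps in your sketch deserve tightening. First, the claim that the pieces of $\widetilde{W}$ assemble along a \emph{tree} is the crux and is not free: you must either check that the dual graph of the decomposition is simply connected (which does follow from simple connectivity of $\widetilde{W}$ by a nerve/van Kampen argument, since the pieces and their intersections are connected), or instead build the tree of spaces abstractly and verify that it is a simply connected covering space of $W$, hence \emph{is} the universal cover; as stated, the identification with the Bass--Serre tree is asserted rather than proved. Second, your exhaustion of the tree by an increasing \emph{sequence} of finite subtrees implicitly assumes the tree is countable, which fails when $H$, $K$, $L$ are uncountable (the proposition is stated for arbitrary groups). The standard fix: any map of a sphere into $\widetilde{W}$ has compact image, hence lands in the preimage of some finite subtree, which is contractible by your inductive gluing argument; so all homotopy groups of $\widetilde{W}$ vanish and Whitehead's theorem (the spaces here may be taken to be CW) finishes the proof. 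Neither point is a serious obstruction, but both belong in a complete write-up.
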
 

\begin{proof} 
Both assertions are special cases of a theorem concerning arbitrary 
graphs of groups; see for example~\cite[1.B.11]{hatcher} although 
note that the directed graph used in~\cite[1.B]{hatcher} is the 
barycentric subdivision (with edges oriented from the larger edge 
midpoint to the vertex) of the graph usually used to index a graph
of groups.
\end{proof} 

\begin{proposition}\label{prop:fpa}
With hypotheses as in part (1) of Proposition~\ref{prop:ggp}, if in addition
the maps $i:Z\rightarrow X$ and $j:Z\rightarrow Y$ are isomorphisms 
from $Z$ to subcomplexes of $X$ and $Y$, then the coproduct
$X\sqcup Y/i(z)=j(z)$ is also an Eilenberg--Mac~Lane space for 
$H*_LK$.  
\end{proposition}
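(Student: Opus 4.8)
The plan is to deduce this directly from part~(1) of Proposition~\ref{prop:ggp} by comparing the strict coproduct with the double mapping cylinder and showing the two are homotopy equivalent. Write $D$ for the homotopy colimit of part~(1), namely $(X\sqcup Z\times I\sqcup Y)/{\sim}$ with $(z,0)\sim i(z)$ and $(z,1)\sim j(z)$, and write $P=X\sqcup Y/(i(z)=j(z))$ for the coproduct in the statement. There is an evident collapse map $q\colon D\to P$ which is the identity on the images of $X$ and $Y$ and which sends $(z,t)$ to the common image of $i(z)$ and $j(z)$. Since $D$ is an Eilenberg--Mac~Lane space for $H*_LK$ by part~(1), and being such a space is a homotopy invariant, it suffices to prove that $q$ is a homotopy equivalence.

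First I would observe that identifying $Z$ with subcomplexes of $X$ and $Y$ makes both $i$ and $j$ cofibrations, so each of the pairs $(X,i(Z))$ and $(Y,j(Z))$ is an NDR pair. The key step is then to realise $q$ as the map of pushouts induced by a map of spans. One exhibits $D$ as the pushout of $M\xleftarrow{\iota}Z\xrightarrow{j}Y$, where $M=(X\sqcup Z\times I)/((z,0)=i(z))$ is the mapping cylinder of $i$ and $\iota(z)=(z,1)$ is the inclusion of its free end; likewise $P$ is the pushout of $X\xleftarrow{i}Z\xrightarrow{j}Y$. The standard retraction $r\colon M\to X$ collapsing the cylinder is a homotopy equivalence with $r\circ\iota=i$, and together with $\mathrm{id}_Z$ and $\mathrm{id}_Y$ it constitutes a map from the first span to the second whose induced map on pushouts is exactly $q$.

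The main point, and the step I expect to carry the real content, is to conclude that $q$ is a homotopy equivalence. Here I would invoke the gluing lemma for cofibrations: given a map between two pushout spans in which the vertical comparison maps are homotopy equivalences and the relevant legs are cofibrations, the induced map on pushouts is a homotopy equivalence. The legs in question are $\iota\colon Z\to M$, a cofibration because it is the inclusion of the free end of a mapping cylinder, and $i\colon Z\to X$, a cofibration by the subcomplex hypothesis; the vertical maps $r$, $\mathrm{id}_Z$, $\mathrm{id}_Y$ are homotopy equivalences. I expect the only delicate point to be the bookkeeping in applying the gluing lemma---checking that the correct legs are cofibrations and that the comparison square commutes---rather than anything conceptually hard. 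The collapse of the cylinder is a homotopy equivalence precisely because of the added cofibration hypothesis, so no input beyond part~(1) and standard facts about cofibrations is required, and transporting the Eilenberg--Mac~Lane property across $q$ finishes the argument.
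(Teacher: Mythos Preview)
Your proposal is correct and follows essentially the same approach as the paper: both arguments use that the subcomplex inclusions are cofibrations to replace the mapping cylinder by the target space without changing the homotopy type of the glued-up result. The paper compresses this into the single observation that the pair $(X,i(Z))$ is homotopy equivalent to the pair $(M_i,Z)$ (and likewise on the $Y$ side), whereas you spell out the comparison of pushout spans and invoke the gluing lemma explicitly; this is the same argument at a finer level of detail.
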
 

\begin{proof} 
The inclusion of a subcomplex in a CW-complex is a cofibration, and hence 
the pair $(X,i(Z))$ is homotopy equivalent to the pair $(M_i,Z)$, where $M_i$ 
denotes the mapping cylinder of $i:Z\rightarrow X$ and similarly $(X,j(Z))$ 
is homotopy equivalent to the pair $(M_j,Z)$.  The claim follows.  
\end{proof}

\begin{corollary} \label{cor:bsn}
The presentation 2-complex $B(n)$ for $\calb(n)$ is aspherical, and 
each of the generators $a_i$ represents an element of its fundamental
group, $\pi_1(B(n))$, of infinite order.   
The presentation obtained by adding 
the relation $a_{n+1}=1$ to $\calb(n)$ presents the trivial group.  
The subgroup of $\pi_1(B(n))$ generated by $a_1$ and $a_{n+1}$ is 
free of rank two provided that $n\geq 2$.  
\end{corollary}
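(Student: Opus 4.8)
The plan is to recognise $\calb(n)$ as an iterated HNN extension built one generator at a time, and to run a single induction that settles asphericity and the infinite-order claim together; the last two assertions can then be handled directly.

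For $1\le k\le n+1$ set $H_k=\langle a_1,\ldots,a_k : a_i^{a_{i+1}}=a_i^2,\ 1\le i\le k-1\rangle$, so that $H_1=\zz$ and $H_{n+1}=\calb(n)$, and let $B_k$ be the presentation $2$-complex of $H_k$. I would prove by induction on $k$ the combined statement that each of $a_1,\ldots,a_k$ has infinite order in $H_k$ and that $B_k$ is aspherical. The base case $H_1=\zz$, $B_1=S^1$ is immediate. For the step, the infinite order of $a_k$ makes $\langle a_k\rangle$ and $\langle a_k^2\rangle$ infinite cyclic with $a_k\mapsto a_k^2$ an isomorphism, so $H_{k+1}$ is the HNN extension of $H_k$ with stable letter $a_{k+1}$ along these subgroups. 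Taking $X=B_k$ (a $K(H_k,1)$ by induction), $Z=S^1$, and the two maps $Z\to X$ representing $a_k^2$ and $a_k$, which are $\pi_1$-injective because $a_k$ has infinite order, Proposition~\ref{prop:ggp}(2) produces a $K(H_{k+1},1)$, namely $B_k$ with an annulus $S^1\times I$ attached along these two loops. A routine CW computation shows this is homotopy equivalent to $B_{k+1}$, the annulus contributing exactly the new generator $1$-cell $a_{k+1}$ and the new relator $2$-cell; since asphericity is a homotopy invariant it passes to $B_{k+1}$. Finally the base $H_k$ embeds in the HNN extension, so the old generators keep infinite order, and the stable letter $a_{k+1}$ has infinite order by Britton's lemma. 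This closes the induction and proves the first two assertions.

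Adding the relation $a_{n+1}=1$ is handled by reading the relators backwards: the relator with $i=n$ becomes $a_n=a_n^2$, forcing $a_n=1$; this turns the relator with $i=n-1$ into $a_{n-1}=a_{n-1}^2$, forcing $a_{n-1}=1$; continuing down to $i=1$ kills every generator, so the quotient is trivial.

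The remaining assertion is where the real work lies. View $\calb(n)=H_{n+1}$ as the HNN extension of $H_n$ with stable letter $t=a_{n+1}$ and associated subgroups $\langle a_n^2\rangle\subseteq\langle a_n\rangle$, and consider a freely reduced word $w=a_1^{r_0}t^{s_1}a_1^{r_1}\cdots t^{s_m}a_1^{r_m}$ in $a_1$ and $t$. The one fact I need is that no nontrivial power of $a_1$ lies in $\langle a_n\rangle$. I would extract this from the abelianisation of $H_n$: each relator forces $a_i=2a_i$, hence $a_i=0$, for $1\le i\le n-1$, so $H_n^{\mathrm{ab}}\cong\zz$ is generated by the image of $a_n$. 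Since $n\ge2$ the generator $a_1$ maps to $0$ while $a_n$ maps to a generator, whence $\langle a_1\rangle\cap\langle a_n\rangle=1$ in $H_n$. Therefore every interior syllable $a_1^{r_i}$ is a nontrivial element of $H_n$ lying in neither associated subgroup, so $w$ contains no pinch; Britton's lemma gives $w\ne1$ when $m\ge1$, while the case $m=0$ is just the infinite order of $a_1$. Hence the map from the free group on $\{a_1,a_{n+1}\}$ into $\pi_1(B(n))$ is injective, as required. The main obstacle is precisely this step, and the hypothesis $n\ge2$ is essential: for $n=1$ the elements $a_1$ and $a_2=a_{n+1}$ satisfy the defining Baumslag--Solitar relation and generate no free subgroup.
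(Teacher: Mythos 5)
Your proof is correct, but it takes a genuinely different route from the paper's. The paper builds $B(n+1)$ by gluing a second copy of $B(1)$ onto $B(n)$ along the circle labelled $a_{n+1}$, so its induction runs on free products with amalgamation over $\zz$ via Proposition~\ref{prop:fpa}, with the only HNN extension appearing in the base case $\calb(1)=BS(1,2)$; the freeness of $\langle a_1,a_{n+2}\rangle$ is then read off from the Normal Form Theorem for amalgamated free products applied to the splitting $\pi_1(B(n+1))=\pi_1(B(n))*_{\zz}\pi_1(B(1))$. You instead build $\calb(n)$ one generator at a time as a tower of ascending HNN extensions $H_{k+1}=H_k*_{\langle a_k\rangle}$, invoke Proposition~\ref{prop:ggp}(2) at every stage for asphericity rather than only once, and obtain the free subgroup from Britton's lemma applied to the top extension. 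Both arguments hinge on the same key fact --- that $\langle a_1\rangle$ meets the relevant cyclic subgroup trivially ($\langle a_{n+1}\rangle$ in the paper, $\langle a_n\rangle$ in your version) --- but the paper simply asserts this, whereas your abelianisation computation ($a_1$ dies in $H_n^{\mathrm{ab}}\cong\zz$ while $a_n$ generates it, which is exactly where $n\geq 2$ enters) together with the infinite order of $a_1$ gives a clean, explicit proof of it and rules out all pinches. What the paper's decomposition buys is economy: all HNN/Britton-type work is confined to the two-generator base case, and the same gluing-along-circles technique is reused immediately afterwards to analyse $H(n)$ in Corollary~\ref{cor:higman}. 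What your route buys is a more self-contained and elementary verification (abelianisation plus Britton's lemma, no normal form theory for amalgams) that makes the role of the hypothesis $n\geq 2$ transparent. Your treatments of the remaining claims --- the cascade $a_{n+1}=1\Rightarrow a_n=1\Rightarrow\cdots\Rightarrow a_1=1$, and the infinite order of base elements and of the stable letter in an HNN extension --- agree in substance with the paper's.
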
  

\begin{proof} 
The group presented by $\calb(1)$ is the Baumslag-Solitar group 
$BS(1,2)$, which is an HNN-extension of the 
infinite cyclic group $\langle a_1\rangle$ with stable letter 
$a_2$.  The asphericity claim for $n=1$ follows from part (2) of 
Proposition~\ref{prop:ggp}.  Given the relation $a_2=1$, 
the relator in $\calb(1)$ 
reduces to $a_1=a_1^2$, which immediately implies $a_1=1$.  This 
completes the proof when $n=1$.  

All of the claims except that final one are proved by induction
using Proposition~\ref{prop:fpa}, 
since a complex isomorphic to $B(n+1)$ can be obtained by 
identifying the circle labelled $a_{n+1}$ in $B(n)$ with 
the circle labelled $a_1$ in a second copy of $B(1)$.  

For the final claim, note that the cyclic groups $\langle a_1\rangle$ and 
$\langle a_{n+1}\rangle$ have trivial intersection inside $\pi_1(B(n))$ 
for each~$n\geq 1$.  It follows that $\langle a_1,a_{n+2}\rangle$ is 
a free group by applying the Normal Form Theorem for free products 
with amalgamation to the given decomposition of 
$\pi_1(B(n+1))$~\cite[IV.2.6]{lynsch}.  
\end{proof} 

\begin{corollary} \label{cor:higman}
The presentation 2-complex $H(n)$ for $\calh(n)$ is aspherical for all $n$,
and is contractible if and only if $n\leq 3$.  For $n\geq 4$, each 
$a_i$ generates an infinite cyclic subgroup of $\pi_1(H(n))$.  
\end{corollary}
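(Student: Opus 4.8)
The plan is to treat the cases $n\ge 4$ and $n\le 3$ separately, since $\calh(n)$ presents the trivial group exactly in the small cases and the asphericity argument degenerates there. For $n\ge 4$ I would exhibit $H(n)$ as a union of two of the complexes from Corollary~\ref{cor:bsn}, glued along a subcomplex, and invoke Proposition~\ref{prop:fpa}. Let $X$ be the subcomplex of $H(n)$ built from the $1$-cells $a_1,a_2,a_3$ and the $2$-cells for the relators $a_1^{a_2}=a_1^2$ and $a_2^{a_3}=a_2^2$; this is a copy of $B(2)$. Let $Y$ be the subcomplex built from the $1$-cells $a_3,a_4,\ldots,a_n,a_1$ and the remaining $2$-cells; reading the chain $a_3\to a_4\to\cdots\to a_n\to a_1$ this is a copy of $B(n-2)$. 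The two pieces share exactly the $0$-cell and the $1$-cells $a_1$ and $a_3$, so $X\cap Y=Z$ is the wedge of these two circles, a $K(F,1)$ for $F$ free of rank two, while $X\cup Y=H(n)$.

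By the final claim of Corollary~\ref{cor:bsn} the subgroup $\langle a_1,a_3\rangle$ is free of rank two in each of $\pi_1(B(2))$ and $\pi_1(B(n-2))$ (this uses $2\ge 2$ and $n-2\ge 2$, which is exactly where $n\ge 4$ enters), so the inclusions $Z\hookrightarrow X$ and $Z\hookrightarrow Y$ induce injections on fundamental groups. Since $X$, $Y$ and $Z$ are aspherical, Proposition~\ref{prop:fpa} then shows that $H(n)=X\cup_Z Y$ is aspherical, with fundamental group the amalgamated free product $\pi_1(B(2))*_F\pi_1(B(n-2))$. The order statement follows from the normal form theorem: the two factors embed, and by Corollary~\ref{cor:bsn} each generator $a_i$ already has infinite order in whichever factor contains its circle, hence has infinite order in $\pi_1(H(n))$. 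In particular $\pi_1(H(n))$ is non-trivial, so $H(n)$ is not simply connected and therefore not contractible for $n\ge 4$.

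For $n\le 3$ the plan is to prove that $\calh(n)$ presents the trivial group and then deduce contractibility formally. Triviality of $\calh(1)$ is immediate (the single relator reduces to $a_1=1$, and $H(1)$ is literally a disc), while for $n=2,3$ it is an elementary hand computation with the defining relations, due originally to Higman~\cite{higman}; I would not grind this out here. Granting triviality, $\pi_1(H(n))=1$ forces $H_1(H(n))=0$, and since $\calh(n)$ is balanced its cellular chain complex has Euler characteristic $1-n+n=1$, so $H_2(H(n))$ has rank $0$ and, being a subgroup of a free abelian group, vanishes. Thus $H(n)$ is acyclic; a simply connected acyclic CW-complex is contractible by the Hurewicz and Whitehead theorems, so $H(n)$ is contractible, and in particular aspherical, for $n\le 3$. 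This simultaneously settles asphericity in the small cases, where the decomposition above is unavailable: for $n=3$ the piece $Y$ degenerates to $B(1)$, in which $\langle a_1,a_3\rangle$ is the whole group and not free.

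The main obstacle I anticipate is arranging the asphericity argument for $n\ge 4$ so that the geometric decomposition $H(n)=X\cup_Z Y$ matches the hypotheses of Proposition~\ref{prop:fpa} on the nose. Everything hinges on the two ``cut'' generators $a_1$ and $a_3$ spanning a free group of rank two in each half, which is precisely the content of the last sentence of Corollary~\ref{cor:bsn} and also explains why the dichotomy occurs at $n=3$. By contrast, the embedding of the factors (yielding infinite order of the $a_i$) and the Hurewicz--Whitehead deduction in the small cases are routine once Higman's triviality computations are in hand.
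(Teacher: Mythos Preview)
Your proposal is correct and follows essentially the same route as the paper: for $n\ge 4$ you split $H(n)$ into a copy of $B(2)$ and a copy of $B(n-2)$ glued along a two-petalled rose whose inclusion into each piece is $\pi_1$-injective by the free-of-rank-two clause of Corollary~\ref{cor:bsn}, and then invoke Proposition~\ref{prop:fpa}; your cut at $\{a_1,a_3\}$ differs from the paper's cut at $\{a_1,a_{n-1}\}$ only by a cyclic relabelling. For $n\le 3$ you defer the triviality of the group to Higman as the paper does, and your Euler-characteristic plus Hurewicz--Whitehead argument simply makes explicit a step the paper leaves to the reader.
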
 

\begin{proof} 
It can be checked readily with a computer algebra
package that $\calh(n)$ presents the trivial group for $n\leq 3$, 
or see~\cite[pp63--64]{higman} for a direct proof.  
For $n\geq 4$, the complex $H(n)$ can be obtained from a copy of
$B(n-2)$ and a copy of $B(2)$ by identifying the 2-petalled
rose labelled by $a_1$ and $a_{n-1}$ inside $B(n-2)$ with the
2-petalled rose labelled by $a_3$ and $a_1$ respectively inside
$B(2)$.  The remaining claims concerning $H(n)$ now follow
from Proposition~\ref{prop:fpa} and Corollary~\ref{cor:bsn}.
\end{proof}  

We are now ready to start the proof of Lemma~\ref{lemma:main}.  

\begin{proof}
It is immediate that each $P(f)$ is acyclic, and so we 
concentrate on the homotopy groups.  We view the 1-~and~2-cells
of $P(f)$ as being indexed by $\nnp$, so that the generator 
$a_i$ corresponds to the loop around the $i$th edge, and the 
$i$th 2-cell is the 2-cell corresponding to the relation 
$a_{f(i)}^{-1}a_ia_{f(i)}=a_i^2$.  

First, consider the case of the function $f(i):=i+1$.  In this 
case, the 2-cells indexed by $1,\ldots,n$ and the 1-cells indexed 
by $1,\ldots,n+1$ form a subcomplex isomorphic to $B(n)$, and the 
whole complex is the ascending union of these subcomplexes.  It 
follows that in this case $P(f)$ is aspherical as claimed.  
To see that $P(f)$ is not contractible, let  
$r:\nnp \rightarrow \zz/4$ be the map defined by $r(i):=[i]:=i+4\zz$.  
This induces a cellular map $P(f)\rightarrow H(4)$ which 
is surjective on fundamental groups.  Since the subgroup $\langle 
a_{r(i)}\rangle$ is infinite, so is the subgroup $\langle a_i\rangle$ 
of the group presented by $\calp(f)$.  

Now we move on to the general case.  We fix a function $f:\nnp\rightarrow 
\nnp$, and so we write $\calp$ instead of $\calp(f)$ and $P$ in place of 
$P(f)$, and prove the 
claim by induction on certain subcomplexes of $P$.  The boundary
of the 2-cell indexed by $i\in\nnp$ meets only the 1-cells $a_i$, 
$a_{f(i)}$ and the 0-cell.  Hence if $S\subseteq \nnp$ is any subset
such that $f(S)\subseteq S$, the cells indexed by $S$ (together with 
the 0-cell) form a subcomplex of~$P$.  Denote this subcomplex by $P(S)$.  
For each $i\in\nnp$, write $F(i)$ for the forward orbit of $i$: 
$F(i):=\{f^n(i)\,:\,n\geq 0\}$, and note that $f(F(i))\subseteq F(i)$.  

As an inductive hypothesis, suppose that we have an $f$-closed subset
$S\subseteq \nnp$ so that $P(S)$ is aspherical, and that $P(S)$ 
satisfies two other properties analogous to those claimed for $P(f)$,   
as described below.  Firstly, we suppose that $P(S)$ is contractible 
if and only if every forward orbit of $f|_S$ is eventually periodic 
of period at most~3.  Secondly, we suppose that for each $i\in S$, 
the subgroup of $\pi_1(P(S))$ generated by $a_i$ is either infinite 
or trivial, and is trivial iff the forward orbit $F(i)$ is eventually 
periodic of period at most~3.  

Let $i$ be the least element of $\nnp-S$.  There are two main  
cases to consider, depending whether $F(i)\cap S$ is empty or 
non-empty.  

If $F(i)\cap S\neq \emptyset$, let $n$ be minimal
so that $f^n(i)\in S$.  In this case, $P(F(i)\cup S)$ is isomorphic
to the union of $P(S)$ and the subspace $X$ consisting of the 0-cell, 
the 1-cells indexed by $\{i,f(i),\ldots,f^{n}(i)\}$ and the 2-cells 
indexed by the set $\{i,f(i),\ldots,f^{n-1}(i)\}$.  The subspace~$X$ 
is isomorphic to $B(n)$.  The intersection of $P(S)$ and $X$ is the 
circle consisting of the 0-cell and the 1-cell indexed by 
$f^n(i)$.  If the forward orbit of $i$ is eventually periodic of 
period at most~3, then so is the forward orbit of $f^n(i)$, and so 
the circle indexed by $f^n(i)$ is contractible in $P(S)$, and 
attaching a copy of $B(n)$ to this circle does not change the 
homotopy type, so $P(F(i)\cup S)$ is homotopy equivalent to 
$P(S)$.  If on the other hand the forward orbit of $i$ is 
infinite, or eventually periodic of period greater than~3, then 
the circle indexed by $f^n(i)$ represents an element of infinite
order in the fundamental groups of both $P(S)$ and $X\cong 
B(n)$.  The inductive hypothesis passes to $P(S\cup F(i))
= P(S)\cup_{f^n(i)}X$.  

If $F(i)\cap S=\emptyset$, then $P(F(i)\cup S) = 
P(F(i))\vee P(S)$, the 1-point union of $P(F(i))$ 
and $P(S)$, and $\pi_1(P(F(i)\cup S)$ 
is just the free product $\pi_1(P(F(i)))*\pi_1(P(S))$.  
Thus our inductive hypothesis will hold for $F(i)\cup S$
provided that we can show that it holds for $F(i)$.  This splits 
into three further subcases.  If $F(i)$ is 
infinite, then $P(F(i))$ is isomorphic to the complex discussed 
in the first paragraph, and the claims hold.  If $f$ is periodic of
period $n$ when restricted to $F(i)$, then $P(F(i))$ is isomorphic
to the complex $\calh(n)$, and the inductive hypotheses hold by 
Corollary~\ref{cor:higman}.  If neither of these cases holds, we 
take an intermediate step between $S$ and $S\cup F(i)$.  
Pick the least $n>0$ so that there exists $m>n$ with $f^m(i)=f^n(i)$, 
and let $j:=f^n(i)$.  We have that $f$ is periodic on $F(j)$ and 
$F(j)\cap S=\emptyset$, so by the cases already covered we deduce 
that the inductive hypotheses hold for $S':=S\cup F(j)$.  Note also
that $F(i)\cap S'=F(j)\neq\emptyset$, so getting from $S'$ to $S'\cup 
F(i)=S\cup F(i)$ reduces to the other main case considered in the 
previous paragraph.  
\end{proof} 

Next we prove Corollary~\ref{corollary:collatz}.  

\begin{proof}
By Lemma~\ref{lemma:main}, 
it suffices to show that any forward orbit of the Collatz function~$C$
that does not eventually join the periodic orbit $1,4,2,\ldots$ is 
either infinite or has eventual period greater than~3.  So let
$n_1,n_2\ldots,n_l,\ldots$ be a periodic orbit of period $l$, 
and choose the starting point so that $n_1<n_i$ for $2\leq i\leq l$.  
This implies that $n_1$ is odd, and that $n_2=C(n_1)=3n_1+1$.  Since 
$3n_1+1>2n_1$, we see that $C(n_2)>n_1$ and so $l>2$.  If $l=3$, it 
must be that $C(C(n_2))=n_1$, and hence $3n_1+1=4n_1$, which implies
that $n_1=1$.  
\end{proof}  

A generalized Collatz function is a function $g:\nnp\rightarrow \nnp$ 
such that there exists an integer $m>0$ and rationals $a_i,b_i$ for 
$0\leq i<m$ so that whenever $x$ is congruent to $i$ modulo $m$, 
$g(x)=a_ix+b_i$.  In particular, the function $C$ can be written in 
this way for $m=2$, $a_0=1/2$, $b_0=0$, $a_1=3$, $b_1=1$.  
To prove Theorem~\ref{thm:main}, we quote a theorem of Kurtz and 
Simon~\cite{kurtz-simon}, that strengthens a well known result 
due to Conway~\cite{conway,lagarias}.  They define GCP to be the problem 
of deciding, for each generalized Collatz function $g$, whether every
forward orbit of $g$ contains~1.  They
show~\cite[thrm.~3]{kurtz-simon} that GCP is $\Pi^0_2$-complete; 
and hence in particular there can be no partial algorithm that 
can identify either the generalized Collatz functions that satisfy
GCP or the ones that do not.  

Given a function $f:\nnp\rightarrow \nnp$, we define a new function 
$\widehat f:\nnp\rightarrow \nnp$ as follows.  Firstly, let $\phi:
\nnp\rightarrow \nnp\times \zz/4$ be the function $\phi(n)=
(\lfloor (n+3)/4\rfloor,[n])$, where as before we use $[n]$ to 
denote the class $n+4\zz\in \zz/4$, and $\lfloor q \rfloor$ is the 
greatest integer less than or equal to $q$.  Note 
that $\phi$ is a bijection and that
both $\phi$ and $\phi^{-1}$ are easily computable.  Now define 
$\widetilde{f}:\nnp\times \zz/4\rightarrow \nnp\times \zz/4$ by 
\[\widetilde{f} (m,[i]):=\begin{cases} 
(f(m),[i+1])&\text{for}\,\,\, m\neq 1,\\
(1,[i])&\text{for}\,\,\, m=1.\\
\end{cases}\] 
Finally, define $\widehat{f}:=\phi^{-1}\circ\widetilde{f}\circ\phi$.  

\begin{lemma}\label{lemma:fhat}
The following are equivalent, for any function
$f:\nnp\rightarrow\nnp$. 
\begin{itemize} 
\item{} Every forward orbit of $f$ contains 1; 
\item{} Every forward orbit of $\widehat f$ is eventually periodic 
of period at most~3.
\end{itemize} 
\end{lemma}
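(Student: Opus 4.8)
The plan is to analyze the construction of $\widehat{f}$ carefully and show that the two conditions correspond under the bijection $\phi$. Since $\widehat{f} = \phi^{-1}\circ\widetilde{f}\circ\phi$ and $\phi$ is a bijection, the forward orbits of $\widehat{f}$ are exactly the images under $\phi^{-1}$ of the forward orbits of $\widetilde{f}$ on $\nnp\times\zz/4$. Moreover, a forward orbit of $\widehat{f}$ is eventually periodic of period at most $3$ if and only if the corresponding forward orbit of $\widetilde{f}$ is. So it suffices to prove the equivalence with $\widehat{f}$ replaced by $\widetilde{f}$ acting on $\nnp\times\zz/4$: every forward orbit of $f$ contains $1$ if and only if every forward orbit of $\widetilde{f}$ is eventually periodic of period at most $3$.

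First I would examine the dynamics of $\widetilde{f}$ directly from its definition. For a starting point $(m,[i])$ with $m\neq 1$, the first coordinate evolves exactly as under $f$ (it goes $m, f(m), f^2(m),\ldots$) while the second coordinate increments by $1$ in $\zz/4$ at each step, until the first coordinate reaches the value $1$. The key observation is that once the first coordinate equals $1$, the map fixes it: $\widetilde{f}(1,[i])=(1,[i])$, so the orbit becomes constant, hence periodic of period $1\leq 3$. Thus if the forward orbit of $m$ under $f$ contains $1$, the $\widetilde{f}$-orbit of $(m,[i])$ eventually lands on a fixed point and is eventually periodic of period $1$.

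Conversely, I would argue the contrapositive: if the forward orbit of some $m$ under $f$ does \emph{not} contain $1$, then the first coordinate never becomes $1$, so the second coordinate keeps cycling through $\zz/4$ with period $4$ forever. I would then show this forces the $\widetilde{f}$-orbit to fail the ``eventually periodic of period at most $3$'' condition. The point is that any eventual period of the orbit must be a common period for both coordinates; since the second coordinate has the genuine periodic behaviour of adding $1$ in $\zz/4$ (never halting), any period of the full orbit must be a multiple of the order of $[1]$ in $\zz/4$, namely $4$. Hence the eventual period is at least $4>3$, and in particular the orbit is not eventually periodic of period at most $3$.

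The main obstacle will be making the second-coordinate argument fully rigorous, because the first coordinate might itself be eventually periodic even when it avoids $1$, and I must verify that the $\zz/4$-increment truly obstructs small periods. Concretely, suppose the $\widetilde{f}$-orbit of $(m,[i])$ is eventually periodic with period $p\leq 3$; then for large $k$ we have $\widetilde{f}^{k+p}(m,[i])=\widetilde{f}^{k}(m,[i])$, and comparing second coordinates (each application with $m\neq 1$ adds $[1]$) gives $[k+p+i]=[k+i]$ in $\zz/4$, i.e.\ $p\equiv 0 \pmod 4$, contradicting $1\leq p\leq 3$. The only way to avoid this is for the first coordinate to reach $1$ (so that the increment stops), which returns us to the orbit of $f$ containing $1$. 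Assembling these observations gives the equivalence for $\widetilde{f}$, and transporting back along the bijection $\phi$ completes the proof for $\widehat{f}$.
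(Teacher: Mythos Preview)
Your proof is correct and follows essentially the same approach as the paper: reduce to $\widetilde{f}$ via the bijection $\phi$, observe that orbits reaching the first coordinate $1$ become fixed, and otherwise use the $\zz/4$-increment on the second coordinate to force any eventual period to be a multiple of~$4$. You spell out the modular-arithmetic step in more detail than the paper does, but the argument is the same.
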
 

\begin{proof} 
Since $\phi$ is a bijection, the dynamics of $\widehat{f}$ and 
$\widetilde{f}$ are identical.  It is clear that for any $i,x$,
the forward orbit of $(x,[i])$ under $\widetilde{f}$ will be 
eventually constant (i.e., eventually periodic of period~1) if
the forward orbit of $x$ under $f$ contains~1.  
Similarly, if the forward orbit of $x$ under $f$ does not 
contain~1, then for each $n\geq 0$ we see that 
$\widetilde{f}^n(x,[i])=(f^n(x),[i+n])$, 
and so the forward orbit of $(x,[i])$ will be either infinite
or eventually periodic of period divisible by~4.  
\end{proof} 

We are now ready to prove Theorem~\ref{thm:main}.  

\begin{proof} 
Let $g$ be an arbitrary generalized Collatz function, and consider
the question of whether $P(\widehat{g})$ is contractible.  By 
Lemma~\ref{lemma:main}, this is equivalent to the question of 
whether every forward orbit of $\widehat{g}$ is eventually periodic
of period at most three.  By Lemma~\ref{lemma:fhat}, this is 
equivalent to the question of whether every forward orbit of $g$ contains~1. 
But Kurtz and Simon showed that this question is $\Pi^0_2$-complete, 
which implies the claim.
\end{proof}

\section{Closing remarks} 

Similar but simpler techniques can be used to prove undecidability
results for the homology of infinite complexes, and we give two 
examples below.  

\begin{theorem}\label{thm:conn}
There is no algorithm to decide whether a recursively described
graph is connected.  Moreover, there is no partial algorithm to 
find the connected ones, and no partial algorithm to find the 
disconnected ones.  
\end{theorem}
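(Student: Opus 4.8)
The plan is to reduce directly from the generalized Collatz problem (GCP) to the graph-connectivity problem, exactly paralleling the structure used for Theorem~\ref{thm:main} but with the much simpler invariant of connectivity replacing contractibility. The key observation is that the undecidability already lives in the dynamics of functions on $\nnp$; we only need to encode a forward-orbit reachability question as a connectivity question, and this requires far less machinery than building acyclic aspherical 2-complexes.

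Let me think about how I would encode this.

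Given a function $f:\nnp\rightarrow\nnp$, the natural recursively described graph to associate is the functional graph $G(f)$: take one vertex for each $i\in\nnp$, and for each $i$ put a single edge joining $i$ to $f(i)$. This graph is plainly recursively described whenever $f$ is computable, since from the index $i$ one computes $f(i)$ and lists the incident edge. The first step is to record the elementary fact that $G(f)$ is connected if and only if any two elements of $\nnp$ have forward orbits that eventually meet; equivalently, $G(f)$ is connected precisely when all forward orbits eventually flow into a single common orbit.

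Next I would arrange, by the same device used before Lemma~\ref{lemma:fhat}, for the question "every forward orbit contains $1$" to become a clean connectivity statement. If $f$ has the property that every forward orbit contains~$1$, then every vertex is joined through a finite path to the vertex~$1$, so $G(f)$ is connected. Conversely, if some forward orbit misses~$1$, one must ensure that this orbit lands in a genuinely separate component. The cleanest way to guarantee this is to modify $f$ so that $1$ is a fixed point (set $f(1)=1$, which the generalized Collatz normalization already arranges once we pass to $\widehat f$-style coordinates) and so that orbits avoiding~$1$ are forced to be infinite or to cycle away from~$1$; then a forward orbit avoiding~$1$ yields vertices whose only edges lead further along that orbit, never reaching the component of~$1$.

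The reduction then runs as follows. First I would observe that if $g$ is a generalized Collatz function then $G(g)$ (or a trivially modified version with $1$ made a fixed point and the small finite cycle $1,4,2$ collapsed) is a recursively described graph. Second, I would show $G(g)$ is connected if and only if every forward orbit of $g$ contains~$1$, using the two directions sketched above. Since Kurtz and Simon proved that the latter property is $\Pi^0_2$-complete, the connectivity problem for this family is also $\Pi^0_2$-complete, which immediately gives both the non-existence of a decision algorithm and the non-existence of either one-sided partial algorithm.

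I expect the main obstacle to be the converse direction of the encoding: ensuring that an orbit missing~$1$ genuinely sits in a distinct connected component rather than merely failing to contain~$1$ while still being joined to~$1$'s component through some other vertex mapping into it. In the functional graph, distinct forward orbits can share a common downstream vertex even when neither contains~$1$, so connectivity of $G(f)$ is about eventual confluence of all orbits, not about containing a fixed basepoint. The fix is to build the basepoint into the function: by working with a function whose only periodic orbit reachable from anywhere is the fixed point~$1$ (achievable by the same $\phi$-coordinate trick, or more directly by noting that for the generalized Collatz functions of interest the conjecture "every orbit contains $1$" is exactly the statement that $1$ is the unique attractor), confluence and reaching~$1$ coincide, and the equivalence is clean.
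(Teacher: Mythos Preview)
Your approach is essentially the paper's: build the functional graph of $f$ and invoke Kurtz--Simon.  The one substantive point is how to make ``connected'' coincide exactly with ``every forward orbit hits $1$'', since the edge $v_1\,\text{---}\,v_{f(1)}$ could join the component of $1$ to an orbit that never returns to $1$.  You handle this by redefining $f(1):=1$; the paper instead simply omits that edge, indexing edges so that $e_i$ joins $v_{i+1}$ to $v_{f(i+1)}$.  Either device works, and once it is in place the set $S=\{j:\text{the forward orbit of }j\text{ contains }1\}$ is edge-closed (for $j\geq 2$ one has $j\in S\iff f(j)\in S$), so the component of $v_1$ is exactly $\{v_j:j\in S\}$ and the equivalence is immediate.

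Two cautions about your write-up.  First, the detour through $\widehat f$ does not do what you want: $\widehat f$ has \emph{four} fixed points $1,2,3,4$, so the orbit of $2$ under $\widehat f$ never contains $1$ and ``GCP for $\widehat f$'' always fails; Lemma~\ref{lemma:fhat} trades GCP for a \emph{period} condition, not for GCP again.  Just set $f(1)=1$ (this does not change whether every orbit contains $1$) and stop.  Second, your final paragraph drifts toward assuming that $1$ is the unique attractor, which is precisely the undecidable hypothesis; you do not need this, and should instead give the short edge-closure argument above.
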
 

\begin{proof} 
For a function $f:\nnp\rightarrow \nnp$, define a graph $\Gamma(f)$ 
with vertex and edge set indexed by $\nnp$, where the vertices of the 
edge $e_i$ are $v_{i+1}$ and $v_{f(i+1)}$.  This graph is connected if and
only if every forward orbit of $f$ contains~1.  

By the Kurtz-Simon theorem, the question of whether the graph
$\Gamma(g)$ is connected, for $g$ any generalized Collatz function, is
$\Pi^0_2$-complete, which implies the claim.
\end{proof} 

A similar argument applies to the computation of the first homology 
of recursively described 2-complexes, because the group with 
presentation $\calq(f)$ given by 
\[\calq(f):= \langle a_i,\,\,i\in \nnp\,\,:\,\,
a_1=1,\,\,a_{i+1}=a_{f(i+1)}\rangle\]
is free, and is trivial if and only if every forward 
orbit of $f$ contains~1.  

The top homology group (i.e., $H_1$ for graphs or $H_2$ for 2-complexes) 
is different however: 

\begin{proposition} \label{prop:toph} For each $n$, 
there is a partial algorithm that identifies the recursively described
$n$-dimensional complexes with non-zero $n$th homology group.  
\end{proposition}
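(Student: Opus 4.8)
The plan is to exploit the fact that the top homology group of a complex is a quotient of the group of top-dimensional cellular chains, and nonvanishing can be witnessed by a single cycle that is not a boundary. Concretely, for an $n$-dimensional complex, $H_n$ is the kernel of the boundary map $\partial_n\colon C_n\to C_{n-1}$ (since there are no $(n+1)$-cells, nothing is a boundary in top degree), so $H_n\neq 0$ precisely when there exists a nonzero $n$-cycle. The strategy is therefore to search systematically for a nonzero element of $\ker\partial_n$ supported on finitely many $n$-cells.

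First I would describe the partial algorithm as a dovetailed search. Because the complex is recursively described, I can enumerate its $n$-cells and, for each $n$-cell, compute its boundary as an explicit integer combination of $(n-1)$-cells; this is the content of the recursive description. I would then search over finite subsets $F$ of the $n$-cells and over finite tuples of integer coefficients $(c_\sigma)_{\sigma\in F}$, checking whether the finite linear combination $\sum_{\sigma\in F}c_\sigma\,\partial_n\sigma$ vanishes in $C_{n-1}$. The verification that this combination is zero is a finite computation, since the chain $\sum_\sigma c_\sigma\,\partial_n\sigma$ involves only finitely many $(n-1)$-cells, each appearing with a coefficient that can be computed exactly. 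If such an $F$ and coefficient tuple are found with not all $c_\sigma$ zero, the algorithm halts and reports that $H_n\neq 0$.

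The key point making this a correct partial algorithm is that every nonzero element of $H_n$ is represented by a cellular $n$-cycle with \emph{finite} support: any element of $C_n$ is by definition a finitely supported chain, and in top degree every cycle descends to a nonzero homology class exactly when it is a nonzero chain. Hence $H_n\neq 0$ if and only if there is some finite witness of the kind the search enumerates, so the search halts precisely on the complexes with $H_n\neq 0$ and fails to halt otherwise. This gives a partial algorithm identifying exactly the class in question.

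I expect the only genuine subtlety to be confirming that the recursive description does indeed permit exact computation of the boundary map, i.e., that for each $n$-cell one can algorithmically produce its incidence numbers with the $(n-1)$-cells; but this is precisely what it means for the complex to be recursively described, so the step is routine rather than an obstacle. The main conceptual observation, which makes the result hold for top homology while the analogous statement fails for lower homology, is the absence of any boundaries to quotient by in top degree: a top cycle can never be killed, so its nonvanishing is a finitely certifiable, recursively enumerable condition.
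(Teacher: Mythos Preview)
Your proof is correct and follows essentially the same approach as the paper: both exploit that in top degree $H_n=\ker\partial_n$, so nonvanishing is witnessed by a finitely supported linear dependence among the boundaries of the $n$-cells, which can be searched for algorithmically. The paper's version is marginally more streamlined---it checks linear dependence among the boundaries of the first $m$ cells via linear algebra rather than dovetailing over all finite subsets and coefficient tuples---but the underlying idea is identical.
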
 

\begin{proof} 
Fix an integer parameter $m>0$.  Now compute the boundaries of the 
first $m$ of the $n$-cells, expressed as formal sums of $n-1$-cells. 
Use standard techniques of linear algebra to decide whether these 
$m$ elements are linearly dependent; if so then $H_n\neq 0$.  If not, 
increase $m$ and repeat.  
\end{proof} 

The functions that we have considered are of course far simpler than
arbitrary recursive functions.  If $g$ is either a generalized Collatz
function or $g=\widehat{f}$ for some generalized Collatz function $f$,
then the sets $g^{-1}(i)$ are of bounded size and can be easily
computed.  From this one sees that the cellular cochain complexes for
$P(g)$, $Q(g)$ and $\Gamma(g)$ are also recursively described, where 
$Q(g)$ denotes the presentation 2-complex for $\calq(g)$.  The
argument used in Proposition~\ref{prop:toph}, when applied
$H^0(\Gamma(g))$ for any $g$ for which the cochain complex for
$\Gamma(g)$ is recursively described, gives that there is a partial
algorithm that will identify when $\Gamma(g)$ has a \emph{finite}
connected component.

\leftline{\bf Author's address:}

\obeylines

\smallskip
{\tt i.j.leary@soton.ac.uk}

\smallskip
School of Mathematical Sciences, University of Southampton, 
Southampton, SO17 1BJ

\end{document}